\documentclass[a4paper, 11pt]{article}

\usepackage[english]{babel}
\usepackage{amsmath}
\usepackage{amsthm}
\usepackage{amssymb}
\usepackage{graphicx}
\usepackage{hyperref}
\usepackage{enumitem}
\usepackage{enumerate}
\usepackage{subcaption}
\usepackage[utf8]{inputenc}
\usepackage{float}
\usepackage{xcolor}
\usepackage{url}
\usepackage{ragged2e}
\usepackage{authblk}
\newtheorem{defn}{Definition}[section]
\newtheorem{ex}{Example}[section]
\newtheorem{rem}{Remark}[section]
\newtheorem{thm}{Theorem}[section]

\newtheorem{result}{Result}[section]

\title{A Study of the $\ell$-Extension to the Results of Classical Calculus}
\author[1]{Rahul J. Jada}
\author[1*]{Meera H. Chudasama}
\affil[1]{jadarahul19@gmail.com}
\affil[1*]{meera.chudasama@yahoo.co.in}

\affil{Department of Mathematics, Sardar Patel University,    
	\newline Vallabh Vidyanagar, Anand-388120.}
\date{}

\allowdisplaybreaks
\begin{document}
	\maketitle
	\vspace*{-0.7cm}
	\begin{abstract}
		In this paper, new generalizations of the hyper Bessel type differential operator called it as $\ell$-H derivative operator, are studied, and using these, some of the classical results of calculus are extended. Also, new generalization of a natural number is defined and then its combinatorial properties are extended. After this, the $\ell$-H integral is defined, and then its basic properties are assessed and classical results of calculus pertaining to the integral are extended. 
	\end{abstract}
	
	{\textbf{Keywords:} Binomial theorem, Multiplication rule for derivatives, Fundamental theorem of integral calculus, Jacobian, Gaussian integral.} 
	
	\newcommand{\AMSclass}{\textbf{MSC(2020): }}
	\vspace{0.2cm}
	\AMSclass{26A24, 97I50, 26B15, 26A36, 26A46}
	
\section{Introduction}
\hspace{0.6cm}As it is well known, the exponential and trigonometric functions are essential to calculus since they possess eigenfunction property for derivatives and integration \cite{apostol1991calculus}. Calculus also relies heavily on integral and derivative operators \cite{apostol1958mathematical,ThomasFinney}. In this paper, our aim is to generalize some of the results of classical calculus using the generalization of exponential and trigonometric functions because mathematical physics, engineering and other allied areas can not breath without calculus.
	
The following are some terminologies which we will use throughout.
	
Throughout the paper, $\mathbb{N}, \mathbb{N}_0, \mathbb{Z}, \mathbb{R}$ and $\mathbb{C}$ denote, the set of all natural numbers, the set of all non-negative integers, the set of all integers, the set of all real numbers, and the set of all complex numbers respectively.
	
A new generalization of the Gauss hypergeometric function \cite{rainville} is given by Chudasama and Dave namely
$\ell$-H function or $\ell$-Hypergeometric function \cite{chudasama-q-l-hyper,chudasama2017new,lBessel,chudasama2016some,chudasama-q-gen,chudasama2019new} with the aid of confluent hypergeometric function $_1F_1[z]$ \cite{rainville}. The generalized $\ell$-Hypergeometric function \cite{chudasama-q-gen,chudasama2017new,chudasama2019new} defined by Chudasama and Dave given as 
\begin{defn}\label{rHsp}
	For $p$, $r$, $s$ $\in$ $\mathbb{N} \cup \{0\}$, $a_i$, $z$ $\in$ $\mathbb{C}$, $\forall i = 1, 2, \dots, r$ and $b_j, c_k \in \mathbb{C} \setminus \{0, -1, -2, \dots, \}$, $\forall j = 1,2,\dots,s$, $\forall k=1,2,\dots,p$ the generalized $\ell$-Hypergeometric fucntion \cite{chudasama-q-gen,chudasama2017new,chudasama2019new} is denoted and defined by 
	\begin{align*}
		_rH_s^p(\ell;z) &= \ _rH_s^p(\ell;z) \begin{bmatrix}
			a_1, a_2, \dots, a_r; &  & z \\ b_1, b_2, \dots, b_s; & (c_1, c_2, \dots, c_p:\ell);&
		\end{bmatrix}\\
		&= \sum_{n=0}^{\infty}\dfrac{(a_1)_n(a_2)_n\dots(a_r)_n}{(b_1)_n(b_2)_n\dots(b_s)_n \text{ } (c_1)_n^{\ell n}(c_2)_n^{\ell n}\dots(c_p)_n^{\ell n}}\dfrac{z^n}{n!},
	\end{align*}
with $Re(\ell) \geq 0$ and $Re\left( ( c_1+c_2+\dots+c_p)\ell - \dfrac{\ell p}{2}+s-r+1\right)  > 0$.
\end{defn}

In this paper, some classical results of derivatives are extended via $\ell$-H derivative operator \cite{chudasama2016some,chudasama2017new,chudasama2019new}. A generalization of the classical integral is introduced, fundamental results are extended in context of the $\ell$-H exponential function which is a special case of the generalized hypergeometric function $_rH_s^p(\ell;z)$ defined by Chudasama and Dave \cite{chudasama-q-l-hyper,chudasama2017new,lBessel,chudasama2016some,chudasama-q-gen,chudasama2019new} as given in Definition \ref{rHsp}. 
	
One can observe here that since one of the set of denominator parameters going towards infinity together with the summation index along with the parameter \lq$\ell$\rq, certain terminologies viz binomial expansion, derivative operator, integral operator and some functions need to be generalized in this $\ell$-context. We will see these terminologies one by one.  
	
For $Re(\ell) \geq 0$, the $\ell$-H binomial coefficient \cite{lBessel} is defined, for $1 \leq k \leq n$,\\ by 
\begin{equation*}\label{l-bino}
	\binom{n}{k}^{(\ell)} = \dfrac{(n!)^{\ell n+1}}{(k!)^{\ell k+1}((n-k)!)^{\ell n-\ell k+1}}.
\end{equation*}
	
For $n \in \mathbb{N}$, the $\ell$-analogue of the binomial expansion $(z_1 + z_2)^n$ \cite{lBessel}, is given by 
\begin{equation}\label{l-bino th}
	(z_1 + z_2)_{(\ell)}^n = \sum_{k=0}^{n} \binom{n}{k}^{(\ell)} z_1^{n-k} z_2^k,
\end{equation}
with the convention $(z_1 + z_2)_{(\ell)}^0 = 1$. This called the $\ell$-Binomial expansion \cite{lBessel}. Also, it is noted that $(z_1 + z_2)_{(0)}^n = (z_1 + z_2)^n$.

The $\ell$-analogue of the exponential function called it as $\ell$-H exponential function \cite{chudasama2017new} is defined for all $z \in \mathbb{C}$ and $Re(\ell) \geq 0$ as 
\begin{equation}\label{eHl}
	e_H^{\ell}(z) = \sum_{n=0}^{\infty}\dfrac{z^n}{(n!)^{\ell n+1}}.
\end{equation}
Observe that for $\ell=0$
\begin{equation}\label{e^z}
	e_H^{0}(z) = \sum_{n=0}^{\infty}\dfrac{z^n}{n!} = e^z.
\end{equation}
	
\noindent It is noted that \cite{lBessel,chudasama2019new}
\begin{equation}\label{l bino eHl}
	e_H^{\ell} (z_1 + z_2) = e_H^{\ell}(z_1) e_H^{\ell}(z_2).
\end{equation}
With $z_2 = -z_1$, it follows from (\ref{l bino eHl}) that \cite{lBessel,chudasama2019new}
\begin{equation*}\label{eHl0}
	e_H^{\ell}(z_1) e_H^{\ell}(-z_1)= e_H^{\ell}(z_1 - z_1) = e_H^{\ell}(0) = 1,
\end{equation*}
since $e_H^{\ell}(0) = 1$, 
	
Also, Chudasama and Dave have defined following definitions of the $\ell$-H trigonometric functions \cite{chudasama2019new} in the context of the $\ell$-H exponential function.
	\begin{defn}
		For $z \in \mathbb{C}$ and $Re(\ell)\geq0$, the $\ell$-H cosine and $\ell$-H sine functions \cite{chudasama2019new} are denoted and defined by
		\begin{equation}\label{sinlH}
			cos_H^{\ell}(z) = \dfrac{e_H^{\ell}(iz)+e_H^{\ell}(-iz)}{2} =  \sum_{n=0}^{\infty}\dfrac{(-1)^{n}z^{2n}}{((2n)!)^{2\ell n+1}},
		\end{equation}
		and
		\begin{equation}\label{coslH}
			sin_H^{\ell}(z) = \dfrac{e_H^{\ell}(iz)-e_H^{\ell}(-iz)}{2i} =  \sum_{n=0}^{\infty}\dfrac{(-1)^{n}z^{2n+1}}{((2n+1)!)^{2\ell n+\ell+1}}
		\end{equation}
		respectively.
	\end{defn}
\noindent Observe that these two are classical cosine and sine functions when $\ell = 0$.
	
	\begin{rem}
		Now onwards, whenever required, we will follow the following convention:\\
		If $Re(\ell)\geq0$,\text{ } $f_H^{\ell}(z) = \displaystyle \sum_{n=0}^{\infty}a_{n,\ell}\text{ }z^n$, and $z_1, z_2$ are functions of $x$ or functions of $t$ or functions of both $x$ and $t$, then the expansion will take place as
		\begin{equation*}
			f_H^{\ell}(z_1+z_2) = \sum_{n=0}^{\infty}a_{n,\ell}(z_1+z_2)_{(\ell)}^n = \sum_{n=0}^{\infty}a_{n,\ell}\sum_{k=0}^{n}\binom{n}{k}^{(\ell)}z_1^{n-k}z_2^k.
		\end{equation*}
	\end{rem}

	Further, to study properties and behavior of $\ell$-H exponential function and $\ell$-H trigonometric functions, and to generalize results of calculus, we need to recall the differential operator defined by Chudasama and Dave \cite{chudasama2017new}.
	\begin{defn}\label{Delta def}
		Let $f(z) = \displaystyle \sum_{n=1}^{\infty}a_nz^n, 0 \neq z \in \mathbb{C}, \text{ } p\in \mathbb{N}_0$ and $\alpha \in \mathbb{C}$. The operator \cite{chudasama2017new} is denoted and defined by  
		\begin{equation}
			_p\Delta_{\alpha}^{\theta}(f(z)) = \begin{cases}
				\displaystyle  \sum_{n=1}^{\infty}a_n(\alpha)_{n-1}^p(\theta+\alpha-1)^{pn}\left( z^n\right) , & \text{ if } p\in \mathbb{N} \\
				f(z), & \text{ if } p=0
			\end{cases}, 
		\end{equation}
		where $\theta$ is Euler differential operator $\theta \equiv z\dfrac{d}{dz}$ and
		\begin{equation*}
			(\theta + \alpha)^r = \underbrace{(\theta + \alpha)(\theta + \alpha)\dots(\theta + \alpha)}_{r \text{-times}}
		\end{equation*}
		is a special case of the hyper-Bessel differential operators \cite{kiryakova2008transmutation,kiryakova2014hyper}.
	\end{defn}
\noindent Chudasama and Dave, further generalize this operator \cite{chudasama2017new} as,
	
	\begin{defn}\label{lDm oper}
		Let $f(z) = \displaystyle \sum_{n=0}^{\infty}a_nz^n $, $|z|<R$. The operator \cite{chudasama2017new} is denoted and defined by 
		\begin{equation}\label{lDm eq}
			_pD_M^{(z)}\left[ f(z)\right] = z^{-1}\ _p\Delta_1^{\theta}\left( \theta\left( f(z)\right) \right),
		\end{equation}
		where  $z \neq 0, p\in \mathbb{N}_0$ and the operator $_p\Delta_1^{\theta}$ is as defined in Definition \ref{Delta def}.
	\end{defn}
	
	\begin{rem}\label{D ope}
		For $p=0$, we can observe from Definition \ref{Delta def} that,
		\begin{equation*}
			_0\Delta_1^{\theta}\left( f(z)\right) = f(z).
		\end{equation*} 
		Hence,
		\begin{align*}
			_0D_M^{(z)}\left[ f(z)\right] &= z^{-1}\left( \theta\left( f(z)\right) \right)\\
			&= z^{-1} \left( zD\left( f(z)\right)\right) \\
			&= D\left( f(z)\right),
		\end{align*}
		where $D\equiv\dfrac{d}{dz}$.\\
		i.e. For $p=0$, \text{ }$_{p}D_M^{(z)}$ is a classical derivative operator $D$ \cite{ThomasFinney}.
	\end{rem}
	
	\begin{result}
		The operator defined in Definition \ref{lDm oper} is a linear operator \cite{chudasama2017new}. \\
		That is, if $f(z) = \displaystyle \sum_{n=0}^{\infty}a_nz^n$ and $g(z) = \displaystyle \sum_{n=0}^{\infty}b_nz^n$, $|z|<R$, then for $\alpha, \beta \in \mathbb{R}$, 
		\begin{equation*}\label{lDm linear}
			_pD_M^{(z)}\left[ \alpha f(z) + \beta g(z) \right] = \alpha \ _pD_M^{(z)}\left[ f(z)\right] + \beta \ _pD_M^{(z)}\left[ g(z)\right].
		\end{equation*}
	\end{result}
	
	Now, observe that, when the operator defined in Definition \ref{lDm oper} is applied to $\ell$-H exponential function and $\ell$-H trigonometric functions, we get the following respectively \cite{chudasama2017new}. \\
	\begin{result}
		For $\ell \in \mathbb{N}_0$ and fixed $t$ \cite{chudasama2017new},
		\begin{eqnarray*}
			_{\ell}D_M^{(z)}\left[ e_H^{\ell}(zt) \right] = te_H^{\ell}(zt) \hspace{0.2cm}, \\
			\ _{\ell}D_M^{(z)}\left[ cos_H^{\ell}(zt) \right] = -t sin_H^{\ell}(zt) \hspace{0.2cm}, \\
			\ _{\ell}D_M^{(z)}\left[ sin_H^{\ell}(zt) \right] = t cos_H^{\ell}(zt).
		\end{eqnarray*}
	\end{result}
	\begin{proof}
		One can easily verify this by definition. 
	\end{proof}
	
\section{Main Results}

\indent In this section, some terminologies and hence some of the fundamental results of calculus like chain rule, multiplication rule and fundamental theorem of calculus are extended in context to the parameter \lq$\ell$\rq.\\
First of all the $\ell$-analogue of a natural number is defined as follows: 
	\begin{defn}\label{l-num}
		For $n \in \mathbb{N} $ and $Re(\ell) \geq 0$, the $\ell$-analogue of a natural number $n$ is denoted and defined as
		\begin{equation*}
			\left[ n\right]_{\ell} = (n)^{\ell n+1}((n-1)!)^{\ell} = \dfrac{(n!)^{\ell n+1}}{((n-1)!)^{\ell n-\ell+1}},
		\end{equation*}
		with $\left[ 0\right]_{\ell}=0$.
	\end{defn}
	
	\begin{rem}\label{n num}
		For $\ell =0$, formula of the Definition \ref{l-num} reduces to
		\begin{equation*}
			\left[ n\right]_{0} = \dfrac{n!}{(n-1)!} = n.
		\end{equation*}
	\end{rem}
	
	\begin{defn}\label{l-ff}
		For $n \in \mathbb{N}_0$, the $\ell$-H factorial function ($\ell$-H Pochhammer symbol) is denoted and defined as 
		\begin{equation*}
			\left( \left[ n\right]_{\ell} \right)_k = \left[n\right]_{\ell}\left[n+1\right]_{\ell} \left[n+2\right]_{\ell} \dots \left[n+k-2\right]_{\ell} \left[n+k-1\right]_{\ell}.
		\end{equation*}
	\end{defn}
\noindent This leads us to the special case when $n=1$,  
	\begin{equation}\label{l-fac}
		\left( \left[ 1\right]_{\ell} \right)_k = \left[k\right]_{\ell}! := \left[k\right]_{\ell}\left[k-1\right]_{\ell} \left[k-2\right]_{\ell} \dots \left[2\right]_{\ell} \left[1\right]_{\ell},
	\end{equation} 
	with $\left[0\right]_{\ell}! = 1$.
	
\begin{rem}
	For $\ell=0$, by Remark \ref{n num}, Definition \ref{l-ff} reduces to \cite{rainville,saran1989special}
	\begin{align*}
		\left( \left[n\right]_{0}\right)_k &= \left[n\right]_{0}\left[n+1\right]_{0} \left[n+2\right]_{0} \dots \left[n+k-2\right]_{0} \left[n+k-1\right]_{0}\\ &= n(n+1) \dots (n+k-2)(n+k-1) \\ &= \left( n\right)_k,
	\end{align*}
	that is, the Pochammer symbol.
\end{rem}
	
\begin{result}\label{fff}
	For $n \in \mathbb{N}_0$, 
	\begin{equation*}
		\left[n\right]_{\ell}! = (n!)^{\ell n+1}.
	\end{equation*} 
\end{result}
	
\begin{proof}
	From (\ref{l-fac}),
	\begin{equation*}
		\left[n\right]_{\ell}! = \left[n\right]_{\ell}\left[n-1\right]_{\ell} \left[n-2\right]_{\ell} \dots \left[2\right]_{\ell} \left[1\right]_{\ell}.
	\end{equation*}
	Now, using Definition \ref{l-num}, we have
	\begin{align*}
		\left[n\right]_{\ell}! &= \dfrac{(n!)^{\ell n+1}}{((n-1)!)^{\ell n-\ell+1}} \dfrac{((n-1)!)^{\ell n- \ell +1}}{((n-2)!)^{\ell n-2\ell+1}} \dots \dfrac{(2!)^{2\ell +1}}{(1!)^{\ell +1}} \dfrac{(1!)^{\ell +1}}{(0!)^{0\ell +1}}\\
		&= \dfrac{(n!)^{\ell n+1}}{(0!)^{0\ell+1}} = (n!)^{\ell n+1}.
	\end{align*}
\end{proof}
	
\begin{result}\label{fffc}
	For $m,n \in \mathbb{N}_0$ with $m \leq n$,
	\begin{equation*}
		\left( \left[n-m+1\right]_{\ell}\right)_m = \dfrac{(n!)^{\ell n+1}}{((n-m)!)^{\ell n -\ell m+1}}.
	\end{equation*}
\end{result}

\begin{proof}
	From Definition \ref{l-ff},
	\begin{equation*}
		\left( \left[n-m+1\right]_{\ell}\right)_m = \left[n-m+1\right]_{\ell} \left[n-m+2\right]_{\ell} \dots \left[n-1\right]_{\ell} \left[n\right]_{\ell}.
	\end{equation*}
	Multiplying and dividing by $\left[n-m\right]_{\ell}!$, and then using Result \ref{fff}, we get
	\begin{equation*}
		\left( \left[n-m+1\right]_{\ell}\right)_m = \dfrac{ \left[n\right]_{\ell}!}{\left[n-m\right]_{\ell}!}.
	\end{equation*}
	Using Result \ref{fff}, that is
	\begin{equation*}
		\left[n\right]_{\ell}! = (n!)^{\ell n+1} \text{ and } \left[n-m\right]_{\ell}! = ((n-m)!)^{\ell n- \ell m+1},
	\end{equation*}
	we can write
	\begin{equation*}
		\left( \left[n-m+1\right]_{\ell}\right)_m = \dfrac{(n!)^{\ell n+1}}{((n-m)!)^{\ell n -\ell m+1}}.
	\end{equation*}
\end{proof}
	
	The following are the extensions of the classical chain rule and multiplication rule in context of the operator defined in Definition \ref{lDm oper}.
	\begin{defn}\label{l-chain}
		Let $f_{\ell}(x) = \displaystyle \sum_{n=0}^{\infty}a_{n,\ell}\text{ }x^n$ and $k \in \mathbb{N}$. The $\ell$-H chain rule is defined by 
		\begin{equation*}\label{chain eq}
			_{\ell}D_M^{(x)}\left[ f_{\ell}\left( x^k\right) \right] = \left[ k \right]_{\ell}x^{k-1} \ _{\ell}D_M^{(x^k)}\left[ f_{\ell}\left( x^k\right) \right].
		\end{equation*}
	\end{defn}
	
	\begin{rem}
		For $\ell=0$, using Remark \ref{D ope}, i.e., $\left[ k \right]_{0} = k$ and Remark \ref{n num}, i.e., $_{\ell}D_M^{(x)} \equiv D$, Definition \ref{l-chain} reduces to \cite{leibniz2012early}
		\begin{equation*}
			D\left[f\left(x^k\right)\right] = kx^{k-1}D^{\left(x^k\right)}\left[f\left(x^k\right)\right],
		\end{equation*}
		where $D \equiv \dfrac{d}{dx}$ and $D^{\left(x^k\right)} \equiv \dfrac{d}{dx^k}$, which is chain rule for  derivatives.
	\end{rem}
	In the next theorem, for this newly defined $\ell$-H derivative operator, multiplication rule is established.
	\begin{thm}\label{l-multi}
		Let $f_{\ell}(x) = \displaystyle \sum_{n=0}^{\infty}a_{n,\ell}\text{ }x^n$ and $g_{\ell}(x) = \displaystyle \sum_{n=0}^{\infty}b_{n,\ell}\text{ }x^n$. If $\left[ n \right]_{\ell} {+}_{(\ell)} \left[ k \right] _{\ell} = \left[ n {+} k \right] _{\ell}$ when $n,k \in \mathbb{N}_0$, the $\ell$-H multiplication rule is given by
		\begin{equation*}\label{product eq}
			_{\ell}D_M^{(x)}\left[ f_{\ell}(x)  g_{\ell}(x) \right] = g_{\ell}(x)\ _{\ell}D_M^{(x)}\left[ f_{\ell}(x) \right] +_{(\ell)}  f_{\ell}(x)\ _{\ell}D_M^{(x)}\left[ g_{\ell}(x) \right].
		\end{equation*}
	\end{thm}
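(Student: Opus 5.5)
The plan is to reduce everything to the action of $_{\ell}D_M^{(x)}$ on a single monomial, then expand the product termwise. The first step is to compute $_{\ell}D_M^{(x)}[x^m]$ directly from Definition \ref{lDm oper} and Definition \ref{Delta def} with $p=\ell$ and $\alpha=1$. We have $\theta(x^m)=m x^m$, and then $_{\ell}\Delta_1^{\theta}$ acts on $x^m$ by multiplication by $(1)_{m-1}^{\ell}\, m^{\ell m}=((m-1)!)^{\ell} m^{\ell m}$. After multiplying by $x^{-1}$ we obtain
\begin{equation*}
{}_{\ell}D_M^{(x)}[x^m]=m^{\ell m+1}((m-1)!)^{\ell}\,x^{m-1}=[m]_{\ell}\,x^{m-1},
\end{equation*}
which is exactly Definition \ref{l-num}. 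The case $m=0$ gives $0=[0]_{\ell}$. By linearity (the linearity Result), the operator then acts coefficientwise on power series inside the radius of convergence. Consistently with Result \ref{fff}, this is compatible with $_{\ell}D_M[e_H^{\ell}(x)]=e_H^{\ell}(x)$.

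Next I write the product as the Cauchy product $f_{\ell}g_{\ell}=\sum_{N}\sum_{n+k=N}a_{n,\ell}b_{k,\ell}x^{n+k}$. Applying the operator term by term gives $\sum_{n,k} a_{n,\ell}b_{k,\ell}[n+k]_{\ell}x^{n+k-1}$. The hypothesis of the theorem, that $[n]_{\ell}+_{(\ell)}[k]_{\ell}=[n+k]_{\ell}$, is then used to split each coefficient as $[n+k]_{\ell}=[n]_{\ell}+_{(\ell)}[k]_{\ell}$. Distributing the $\ell$-sum over the double series turns the expression into
\begin{equation*}
\sum_{n,k}a_{n,\ell}[n]_{\ell}x^{n-1}\,b_{k,\ell}x^{k}\;+_{(\ell)}\;\sum_{n,k}a_{n,\ell}x^{n}\,b_{k,\ell}[k]_{\ell}x^{k-1}.
\end{equation*}
This factors as $g_{\ell}\,{}_{\ell}D_M^{(x)}[f_{\ell}]+_{(\ell)}f_{\ell}\,{}_{\ell}D_M^{(x)}[g_{\ell}]$ by the monomial formula from the first step. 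As a final check, setting $\ell=0$ should recover the classical Leibniz rule, since $[n]_0+[k]_0=n+k$.

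The main obstacle is the meaning of $+_{(\ell)}$. It is only defined implicitly through the hypothesis, so I must justify that it is compatible with multiplication by the common factors $a_{n,\ell}b_{k,\ell}x^{n+k-1}$ and with summation over $n,k$. In other words, I need distributivity and additivity of $+_{(\ell)}$ across infinite sums. I would treat $+_{(\ell)}$ as a formal operation defined coefficientwise on double power series, so that the hypothesis applied to each coefficient pair is by definition what $+_{(\ell)}$ means for the series. Absolute convergence inside the common radius of convergence then justifies the rearrangements.
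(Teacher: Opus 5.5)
Your proposal is correct and is essentially the paper's proof run in the opposite direction. The paper starts from $g_{\ell}\,{}_{\ell}D_M^{(x)}[f_{\ell}] +_{(\ell)} f_{\ell}\,{}_{\ell}D_M^{(x)}[g_{\ell}]$ and reindexes both double series by total degree, splitting off the $k=0$ and $k=n$ terms (which your version handles automatically through $[0]_{\ell}=0$). It then merges $[n-k]_{\ell}+_{(\ell)}[k]_{\ell}$ into $[n]_{\ell}$, using the monomial rule ${}_{\ell}D_M^{(x)}[x^m]=[m]_{\ell}x^{m-1}$, which it asserts and you verify, and the same tacit coefficientwise reading of $+_{(\ell)}$ that you make explicit.

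One correction: drop the appeal to absolute convergence. For $\ell\geq 1$ the series ${}_{\ell}D_M^{(x)}[f_{\ell}]$ can have radius of convergence zero, for example $f_{\ell}(x)=\exp(2x)$ as in the paper's own example. Nothing is lost, because each coefficient of $x^{N-1}$ is a finite sum, so the identity holds as one of formal power series.
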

	
	\begin{proof}
	We will use following series transformations \cite{rainville}:
		\begin{equation}\label{0,0}
			\sum_{n=0}^{\infty}\sum_{k=0}^{\infty} A(k,n) = \sum_{n=0}^{\infty}\sum_{k=0}^{n} A(k,n-k),
		\end{equation}
		\begin{equation}\label{1,0}
			\sum_{n=1}^{\infty}\sum_{k=0}^{\infty} A(k,n) = \sum_{n=1}^{\infty}\sum_{k=0}^{n-1} A(k,n-k)
		\end{equation}
		and
		\begin{equation}\label{0,1}
			\sum_{n=0}^{\infty}\sum_{k=1}^{\infty} A(k,n) = \sum_{n=1}^{\infty}\sum_{k=1}^{n} A(k,n-k).
		\end{equation}
		Consider
		\begin{align*}
			h_{\ell}(x) &= f_{\ell}(x)g_{\ell}(x)\\
			&= \sum_{n=0}^{\infty}a_{n,\ell}\text{ }x^{n} \sum_{k=0}^{\infty}b_{k,\ell}\text{ }x^{k}\\ 
			&= \sum_{n=0}^{\infty} \sum_{k=0}^{\infty}a_{n,\ell}\text{ }b_{k,\ell}\text{ }x^{n+k}.
		\end{align*}
		Using \eqref{0,0}, we have
		\begin{align}
			h_{\ell}(x)&= \sum_{n=0}^{\infty}\left( \sum_{k=0}^{n}a_{n-k,\ell}\text{ }b_{k,\ell}\right) x^{n}\nonumber\\ 
			&= \sum_{n=0}^{\infty}c_{n,\ell}\text{ }x^{n}\label{cnl},
		\end{align}
		where $c_{n,\ell} = \displaystyle \sum_{k=0}^{n}a_{n-k,\ell}\text{ }b_{k,\ell}$.\\
		Then by Definition \ref{lDm oper}, we get
		\begin{align*}
			_{\ell}D_M^{(x)}\left[ h_{\ell}(x) \right] &= \ _{\ell}D_M^{(x)} \left[\sum_{n=0}^{\infty}c_{n,\ell}\text{ }x^{n}\right]\\
			&= \sum_{n=1}^{\infty}\left[ n\right]_{\ell} c_{n,\ell}\text{ }x^{n-1}.
		\end{align*}
		Therefore, from \eqref{cnl}, we can write
		\begin{equation}\label{lD-h(x)}
			_{\ell}D_M^{(x)}\left[ h_{\ell}(x) \right] = \sum_{n=1}^{\infty}\left[ n\right]_{\ell}\left( \sum_{k=0}^{n} a_{n-k,\ell}\text{ }b_{k,\ell}\right) x^{n-1}.
		\end{equation}
		Again from Definition \ref{lDm oper}, we have
		\begin{equation*}
			_{\ell}D_M^{(x)}\left[ f_{\ell}(x) \right] = \sum_{n=1}^{\infty}\left[ n\right]_{\ell}a_{n,\ell}\text{ }x^{n-1}
		\end{equation*}
		and 
		\begin{equation*}
			_{\ell}D_M^{(x)}\left[ g_{\ell}(x) \right] = \sum_{k=1}^{\infty}\left[ k\right]_{\ell}b_{k,\ell}\text{ }x^{k-1}.
		\end{equation*}
		Hence,
		\begin{align*}
			\lefteqn{g_{\ell}(x)\ _{\ell}D_M^{(x)}\left[ f_{\ell}(x) \right] +_{(\ell)} f_{\ell}(x)\ _{\ell}D_M^{(x)}\left[ g_{\ell}(x) \right]}\\ 
			&= \sum_{k=0}^{\infty} \sum_{n=1}^{\infty}\left[ n\right]_{\ell}a_{n,\ell}\text{ }b_{k,\ell}\text{ }x^{n+k-1} +_{(\ell)} \sum_{n=0}^{\infty}\sum_{k=1}^{\infty}\left[ k\right]_{\ell}a_{n,\ell}\text{ }b_{k,\ell}\text{ }x^{n+k-1}.
		\end{align*}
		With the aid of \eqref{1,0}, we get
		\begin{align*}
			\lefteqn{g_{\ell}(x)\ _{\ell}D_M^{(x)}\left[ f_{\ell}(x) \right] +_{(\ell)} f_{\ell}(x)\ _{\ell}D_M^{(x)}\left[ g_{\ell}(x) \right]}\\ 
			&= \sum_{n=1}^{\infty}\sum_{k=0}^{n-1} \left[ n-k\right]_{\ell}a_{n-k,\ell}\text{ }b_{k,\ell}\text{ }x^{n-1} +_{(\ell)} \sum_{n=0}^{\infty}\sum_{k=1}^{\infty}\left[ k\right]_{\ell}a_{n,\ell}\text{ }b_{k,\ell}\text{ }x^{n+k-1}.
		\end{align*}
		Now, using \eqref{0,1}, we get
		\begin{align*}
			\lefteqn{g_{\ell}(x)\ _{\ell}D_M^{(x)}\left[ f_{\ell}(x) \right] +_{(\ell)} f_{\ell}(x)\ _{\ell}D_M^{(x)}\left[ g_{\ell}(x) \right]}\\ 
			&= \sum_{n=1}^{\infty}\sum_{k=0}^{n-1} \left[ n-k\right]_{\ell}a_{n-k,\ell}\text{ }b_{k,\ell}\text{ }x^{n-1} +_{(\ell)} \sum_{n=1}^{\infty}\sum_{k=1}^{n}\left[ k\right]_{\ell}a_{n-k,\ell}\text{ }b_{k,\ell}\text{ }x^{n-1}\\
			&= \sum_{n=1}^{\infty}\left[ \left[ n\right]_{\ell}a_{n,\ell}\text{ }b_{0,\ell}  + \sum_{k=1}^{n-1} \left(\left[ n-k\right]_{\ell} +_{(\ell)} \left[ k\right]_{\ell}\right) a_{n-k,\ell}\text{ }b_{k,\ell} + \left[ n\right]_{\ell}a_{0,\ell}\text{ }b_{n,\ell}\right]x^{n-1}\\
			&= \sum_{n=1}^{\infty}\left[ \left[ n\right]_{\ell}a_{n,\ell}\text{ }b_{0,\ell}  + \sum_{k=1}^{n-1} \left[ n\right]_{\ell} a_{n-k,\ell}\text{ }b_{k,\ell} + \left[ n\right]_{\ell}a_{0,\ell}\text{ }b_{n,\ell}\right]x^{n-1}\\
			&= \sum_{n=1}^{\infty} \left[ n\right]_{\ell} \left( \sum_{k=0}^{n} a_{n-k,\ell}\text{ }b_{k,\ell}\right) x^{n-1},
		\end{align*}
		by hypothesis condition. Hence, from \eqref{lD-h(x)}, we can write
		\begin{equation*}
			g_{\ell}(x)\ _{\ell}D_M^{(x)}\left[ f_{\ell}(x) \right] +_{(\ell)} f_{\ell}(x)\ _{\ell}D_M^{(x)}\left[ g_{\ell}(x) \right] = \ _{\ell}D_M^{(x)}\left[ h_{\ell}(x) \right] = \ _{\ell}D_M^{(x)}\left[ f_{\ell}(x)g_{\ell}(x) \right]
		\end{equation*}	
	\end{proof}

\begin{rem}
	For $\ell=0$, Theorem \ref{l-multi} reduces to \cite{leibniz2012early}
	\begin{equation*}
		D\left[ f(x)g(x)\right] =  D\left[ f(x) \right] g(x) + f(x) D\left[ g(x)\right],
	\end{equation*}
	that is, multiplication rule for derivatives.
\end{rem}

For the justification of the hypothesis of Theorem \ref{l-multi}, we have chosen following example.
	
	\begin{ex}
		Let $f(x) = exp(2x)$ and $g(x) = exp(3x)$.\\
		For the choices of $f(x)$ and $g(x)$ we will prove:
		\begin{equation*}
			_{\ell}D_M^{(x)}\left[ exp(5x) \right] =  exp(3x)\ _{\ell}D_M^{(x)}\left[ exp(2x) \right] +_{(\ell)}  exp(2x)\ _{\ell}D_M^{(x)}\left[ exp(3x) \right]
		\end{equation*}
	\end{ex}
\noindent\textbf{Solution}:
		For $|x|<\infty$,
		\begin{equation*}
			exp(5x) = \sum_{n=0}^{\infty}\dfrac{(5x)^{n}}{n!}.
		\end{equation*}
		Applying an operator defined in Definition \ref{l-num} to $exp(5x)$, we get
		\begin{align}
			_{\ell}D_M^{(x)}\left[ exp(5x) \right] &= \ _{\ell}D_M^{(x)}\left[ \sum_{n=0}^{\infty}\dfrac{(5x)^{n}}{n!} \right]\nonumber\\
			&= \sum_{n=1}^{\infty}\dfrac{\left[ n\right]_{\ell}5^nx^{n-1}}{n!}\label{lD-5x}.
		\end{align}
		Again, when applying $_{\ell}D_M^{(x)}$ as defined in Definition \ref{l-num} to $exp(2x)$ and $exp(3x)$, we have
		\begin{equation*}
			_{\ell}D_M^{(x)}\left[ exp(2x) \right]
			= \sum_{n=1}^{\infty}\dfrac{\left[ n\right]_{\ell}2^nx^{n-1}}{n!}
		\end{equation*}
		and 
\begin{equation*}
	_{\ell}D_M^{(x)}\left[ exp(3x) \right]
	= \sum_{k=1}^{\infty}\dfrac{\left[ k\right]_{\ell}3^kx^{k-1}}{k!},
\end{equation*}
respectively.\\ \\
Hence, from these expressions,
\begin{align*}
	\lefteqn{exp(3x)\ _{\ell}D_M^{(x)}\left[ exp(2x) \right] +_{(\ell)}  exp(2x)\ _{\ell}D_M^{(x)}\left[ exp(3x) \right]} \\
	&= \sum_{k=0}^{\infty}\dfrac{3^kx^{k}}{k!} 
	\sum_{n=1}^{\infty}\dfrac{\left[ n\right]_{\ell}2^nx^{n-1}}{n!} +_{(\ell)} \sum_{n=0}^{\infty}\dfrac{2^nx^{n}}{n!} \sum_{k=1}^{\infty}\dfrac{\left[ k\right]_{\ell}3^kx^{k-1}}{k!}\\
	&= \sum_{n=1}^{\infty} \sum_{k=0}^{\infty}
	\dfrac{\left[ n\right]_{\ell}2^n3^k}{k!n!}x^{n+k-1} +_{(\ell)} \sum_{n=0}^{\infty} \sum_{k=1}^{\infty}\dfrac{\left[ k\right]_{\ell}2^n3^k}{k!n!}x^{n+k-1}\\
	&= \sum_{n=1}^{\infty} \sum_{k=0}^{n-1}
	\dfrac{\left[ n-k\right]_{\ell}2^{n-k}3^k}{k!(n-k)!}x^{n-1} +_{(\ell)} \sum_{n=1}^{\infty} \sum_{k=1}^{n}\dfrac{\left[ k\right]_{\ell}2^{n-k}3^k}{k!(n-k)!}x^{n-1}\\
	&= \sum_{n=1}^{\infty} \left[\dfrac{\left[n\right]_{\ell}2^{n}}{n!} + \sum_{k=1}^{n-1}
	\left(\left[n-k\right]_{\ell} +_{(\ell)} \left[ k\right]_{\ell}\right)\dfrac{2^{n-k}3^k}{k!(n-k)!} + \dfrac{\left[n\right]_{\ell}3^{n}}{n!}\right]x^{n-1}\\
	&= \sum_{n=1}^{\infty} \left[\dfrac{\left[n\right]_{\ell}2^{n}}{n!} + \sum_{k=1}^{n-1}
	\left[n\right]_{\ell} \dfrac{2^{n-k}3^k}{k!(n-k)!} + \dfrac{\left[n\right]_{\ell}3^{n}}{n!}\right]x^{n-1}\\
	&= \sum_{n=1}^{\infty} \left[n\right]_{\ell}\left[\dfrac{2^{n}}{n!} + \sum_{k=1}^{n-1}
	\dfrac{2^{n-k}3^k}{k!(n-k)!} + \dfrac{3^{n}}{n!}\right]x^{n-1}\\
	&= \sum_{n=1}^{\infty}\left[n\right]_{\ell} \left[\sum_{k=0}^{n}\dfrac{1}{k!(n-k)!}2^{n-k}3^k\right] x^{n-1}\\
	&= \sum_{n=1}^{\infty}\dfrac{\left[n\right]_{\ell}}{n!} \left[\sum_{k=0}^{n}\dfrac{n!}{k!(n-k)!}2^{n-k}3^k\right] x^{n-1}.
\end{align*}
Using binomial expansion, that is
\begin{equation*}
	(5)^{n} = (2+3)^{n} = \sum_{k=0}^{\infty}\dfrac{n!}{k!(n-k)!}2^{n-k}3^k,
\end{equation*}
we can write
\begin{align*}
	exp(3x)\ _{\ell}D_M^{(x)}\left[ exp(2x) \right] +_{(\ell)}  exp(2x)\ _{\ell}D_M^{(x)}\left[ exp(3x) \right]
	&= \sum_{n=1}^{\infty}\dfrac{\left[n\right]_{\ell}}{n!} \left[\left(2+3 \right)^n\right] x^{n-1}\\
	&= \sum_{n=1}^{\infty}\dfrac{\left[n\right]_{\ell}5^n}{n!}x^{n-1}.
\end{align*}
Hence, from \eqref{lD-5x}, we have
\begin{align*}
	exp(3x)\ _{\ell}D_M^{(x)}\left[ exp(2x) \right] +_{(\ell)}  exp(2x)\ _{\ell}D_M^{(x)}\left[ exp(3x) \right] 
	&= \ _{\ell}D_M^{(x)}\left[ exp(5x) \right].
\end{align*}\vspace{0.2cm}

The $\ell$-H exponential function \cite{chudasama2019new} is eigen function with respect to the operator defined in Definition \ref{lDm oper}. That is
\begin{thm}\label{Dx of GF}
	If $\ell \in \mathbb{N}_0$, $x,t \in \mathbb{R}$, then the $\ell$-H exponential function $e_H^{\ell}\left( \left[ 2\right]_{\ell}xt-t^2\right) $ with $t$ fixed, is an eigen function with respect to the operator $_{\ell}D_M^{(x)}$ defined in (\ref{lDm eq}). That is,
	\begin{equation*}\label{lDm GF}
		_{\ell}D_M^{(x)}\left( e_H^{\ell}\left( \left[ 2\right]_{\ell}xt-t^2\right) \right) = \left[ 2\right]_{\ell}\ t\ e_H^{\ell}\left( \left[ 2\right]_{\ell}xt-t^2\right) 
	\end{equation*} 
	for fixed $t$.
\end{thm}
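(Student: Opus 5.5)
Following the convention of the Remark on expansions $f_H^{\ell}(z_1+z_2)$, I would write
\begin{equation*}
e_H^{\ell}\left( \left[ 2\right]_{\ell}xt-t^2\right) = \sum_{n=0}^{\infty}\frac{1}{(n!)^{\ell n+1}}\sum_{k=0}^{n}\binom{n}{k}^{(\ell)}\left( \left[ 2\right]_{\ell}xt\right)^{n-k}(-t^2)^k .
\end{equation*}
I would not apply $_{\ell}D_M^{(x)}$ to this double sum directly. Instead I would reduce to the simpler eigenfunction property. By the product formula (\ref{l bino eHl}) (itself an instance of the $\ell$-Binomial expansion), the function factors as $e_H^{\ell}\left( \left[ 2\right]_{\ell}xt\right)e_H^{\ell}(-t^2)$. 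The second factor does not depend on $x$, so for fixed $t$ it is a constant. By linearity of $_{\ell}D_M^{(x)}$ (the linearity Result), it can be pulled outside the operator.

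It then remains to compute $_{\ell}D_M^{(x)}\left[ e_H^{\ell}(xs)\right]$ with $s=\left[ 2\right]_{\ell}t$ fixed. This is exactly the earlier Result that $_{\ell}D_M^{(z)}\left[ e_H^{\ell}(zt)\right] = t\,e_H^{\ell}(zt)$, applied with $z=x$ and $t$ replaced by $\left[ 2\right]_{\ell}t$. To make this self-contained I would also check it termwise. The operator acts on $x^n$ as $\left[ n\right]_{\ell}x^{n-1}$, as used in the proof of Theorem \ref{l-multi}: for $p=\ell$ and $\alpha=1$, $(1)_{n-1}^{\ell}n^{\ell n}\cdot n=\left[ n\right]_{\ell}$. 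Then by Definition \ref{l-num} and Result \ref{fff}, $\left[ n\right]_{\ell}/(n!)^{\ell n+1}=1/((n-1)!)^{\ell(n-1)+1}$, which is the coefficient needed for the index shift. This gives $s\,e_H^{\ell}(xs)$. Multiplying back by the constant $e_H^{\ell}(-t^2)$ and using (\ref{l bino eHl}) again recombines the factors, yielding $\left[ 2\right]_{\ell}t\,e_H^{\ell}\left( \left[ 2\right]_{\ell}xt-t^2\right)$.

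The main obstacle is the factorization step. I must make sure that (\ref{l bino eHl}) is valid when one argument depends on $x$, and that it is consistent with the convention of the Remark, so that splitting off $e_H^{\ell}(-t^2)$ is legitimate. I would first check directly that the Cauchy product of the two series, with the $\ell$-binomial coefficients, matches the double sum above. If that identification is doubtful, the fallback is to differentiate the double sum termwise in $x$ and reindex $n\to n+1$ with $k$ fixed, using Result \ref{fffc} to simplify. That route needs the coefficient identity
\begin{equation*}
\frac{\left[ n-k\right]_{\ell}}{(n!)^{\ell n+1}}\binom{n}{k}^{(\ell)}=\frac{1}{((n-1)!)^{\ell(n-1)+1}}\binom{n-1}{k}^{(\ell)},
\end{equation*}
which follows from the definitions, since both sides reduce to $1/\bigl((k!)^{\ell k+1}((n-1-k)!)^{\ell(n-1-k)+1}\bigr)$.
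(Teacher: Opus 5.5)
Your proposal is correct and follows the paper's proof: you factor $e_H^{\ell}([2]_{\ell}xt-t^2)=e_H^{\ell}([2]_{\ell}xt)\,e_H^{\ell}(-t^2)$ via (\ref{l bino eHl}), pull out the $x$-independent factor, apply the eigenfunction Result with $t$ replaced by $[2]_{\ell}t$, and recombine, exactly as the paper does. Your additional checks are also sound and go beyond what the paper writes: the termwise identity $[n]_{\ell}/(n!)^{\ell n+1}=1/((n-1)!)^{\ell(n-1)+1}$, the fallback coefficient identity, and the Cauchy-product verification, which confirms that (\ref{l bino eHl}) holds under the paper's $\ell$-binomial convention (the paper uses this without comment).
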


\begin{proof}
	Applying the property given in (\ref{l bino eHl}), we can write
	\begin{equation*}
		e_H^{\ell}\left( \left[ 2\right]_{\ell}xt-t^2\right)  = e_H^{\ell}\left( \left[ 2\right]_{\ell}xt\right)  e_H^{\ell}\left( -t^2\right).
	\end{equation*}
	Further, with the aid of (\ref{lDm eq}), we have
	\begin{eqnarray*}
		_{\ell}D_M^{(x)}\left( e_H^{\ell}\left( \left[ 2\right]_{\ell}xt-t^2\right) \right) &=& e_H^{\ell}\left(-t^2\right)\ _{\ell}D_M^{(x)}\left( e_H^{\ell}(\left[ 2\right]_{\ell}xt)\right) \\
		&=& \left[ 2\right]_{\ell}te_H^{\ell}\left( \left[ 2\right]_{\ell}xt-t^2\right). 
	\end{eqnarray*}
	i.e. The $\ell$-H exponential function is an eigen function with respect to the operator $_{\ell}D_M^{(x)}$.
\end{proof}

Next, we subsequently define an $\ell$-analogue of the integral operator \cite{ThomasFinney} and investigate its some of the properties.
	
	\begin{defn}\label{nebla}
		Let $f_{p}\left(x^{\delta}\right) = \displaystyle \sum_{n=0}^{\infty}a_{n,p}\text{ }x^{\delta n}$, $0 \neq x\in \mathbb{R}$, $p, \delta \in \mathbb{N}_0 $, $\alpha \in \mathbb{C}$, with $\alpha \neq 0, -1, -2, \dots,$ and $\alpha \neq -\delta n$ for $n \in \mathbb{N}_0$. Define
		\begin{align}
			_p\nabla_{\alpha}^{\delta}\left(f_{p}\left(x^{\delta}\right)\right) &=\ _p\nabla_{\alpha}^{\delta}\left(\displaystyle \sum_{n=0}^{\infty}a_{n,p}\text{ }x^{\delta n}\right)\nonumber\\ 
			&= \begin{cases}
				\displaystyle  \sum_{n=0}^{\infty}a_{n,p}\dfrac{1}{(\alpha)_{\delta n}^p(\delta n+\alpha)^{p(\delta n+1)}} x^{\delta n}, & \text{if } p\in \mathbb{N} \\
				f(x), & \text{if } p=0 \label{nabla eq}
			\end{cases}.
		\end{align}
	\end{defn}
	
	\begin{defn}\label{lIm}
		Let $f_{p}(x) = \displaystyle \sum_{n=0}^{\infty}a_{n,p}\text{ }x^{\delta n}$, $0 \neq x\in \mathbb{R}$, $p, \delta \in \mathbb{N}_0$, $a,b \in \mathbb{R}$ with $a \leq b$ and $\alpha \in \mathbb{C}$, with $\alpha \neq -\delta n$, $\alpha \neq 0, -1, -2, \dots,$. Then, the $p$-integral over $a$ to $b$ is denoted and defined as
		\begin{align}\label{lIm eq}
			\int_{(p,a)}^{b}f_{p}(x)\,dx &= \left[ x\ _p\nabla_1^{\delta}\left( \Lambda \left( f_{p}(x)\right) \right)\right]_a^b\\ 
			&= \left[ b\ _p\nabla_1^{\delta}\left( \Lambda \left( f_{p}(b)\right) \right)\right] -\left[ a\ _p\nabla_1^{\delta}\left( \Lambda \left( f_{p}(a)\right) \right)\right],\nonumber
		\end{align}
		where $\Lambda \left( f_{p}(x)\right) = x^{-1}\int f_{p}(x)\,dx$ and $_p\nabla_1^{\delta}$ is as defined in Definition \ref{nebla}.
	\end{defn}
	
	\begin{rem}
		\begin{itemize}
			\item[1.] For $p=0$, we can observe from Definition \ref{nebla} that
			\begin{equation*}
				_0\nabla_1^{\delta}\left( f_{p}(x)\right) = f(x).
			\end{equation*} \noindent Hence, $p$-integral reduces to classical integration over $[a,b]$ when $p=0$, since
			\begin{align*}
				\int_{(0,a)}^{b} f(x)\,dx 
				&= \left[ x\ _0\nabla_1^{\delta}\left( \Lambda \left( f(x)\right) \right)\right]_a^b\\
				&= \left[ x\ _0\nabla_1^{\delta}\left(x^{-1} \int f(x)\,dx  \right)\right]_a^b\\
				&= \left[ x \left( x^{-1} \int f(x)\,dx \right)\right]_a^b\\
				&= \int_{a}^{b}f(x)\,dx.
			\end{align*}\label{l-int(0)}
			\item[2.] When $b=a$, we can observe from Definition \ref{lIm} that
			\begin{align*}
				\displaystyle \int_{(p,a)}^{a}f_{p}(x)\,dx &= \left[ a\ _p\nabla_1^{\delta}\left( \Lambda \left( f_{p}(a)\right) \right)\right] -\left[ a\ _p\nabla_1^{\delta}\left( \Lambda \left( f_{p}(a)\right) \right)\right]\\ &= 0.
			\end{align*}
			
			\item[3.] The operator defined in Definition \ref{lIm} is linear. That is, for $f_{p}(x) = \displaystyle \sum_{n=0}^{\infty}a_{n,p}\text{ }x^n$ and $g_{p}(x) = \displaystyle \sum_{n=0}^{\infty}b_{n,p}\text{ }x^n$,
			\begin{equation*}\label{lIm linear}
				\int_{(p,a)}^{b}\left( \alpha f_{p}(x) + \beta g_{p}(x) \right)\,dx = \alpha \left( \int_{(p,a)}^{b}f_{p}(x)\,dx\right)  + \beta \left( \int_{(p,a)}^{b}g_{p}(x)\,dx\right),
			\end{equation*}
			for $\alpha, \beta \in \mathbb{R}$.
		\end{itemize}
	\end{rem}

	
	%
	
	Using this in the next theorem, we have proved the $\ell$-analogue of the fundamental theorem of integral calculus \cite{apostol1991calculus,apostol1958mathematical}. 
	\begin{thm}[\textbf{$\ell$-Analogue of the Fundamental Theorem of Calculus}] \label{l-FTC}
		If $f_{\ell}(x) = \ _{\ell}D_M^{(x)}\left[ F_{\ell}(x)\right]$, where $F_{\ell}(x)$ is of the form $\displaystyle \sum_{n=0}^{\infty} a_{n,\ell}\text{ } x^n$ on $[a,b]$, then 
		\begin{equation*}\label{FTC}
			\int_{(\ell,a)}^{b}f_{\ell}(t)\,dt = F_{\ell}(b) - F_{\ell}(a).
		\end{equation*}
	\end{thm}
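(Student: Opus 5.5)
The plan is to reduce everything to monomials: compute what $_{\ell}D_M^{(x)}$ does to $x^n$, write $f_{\ell}$ as a power series, and push the $\ell$-integral of Definition \ref{lIm} through term by term (taking $p=\ell$, $\delta=1$). The key point will be that the two operators act on each power by factors that are exactly reciprocal.

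\textbf{Step 1: the derivative of $F_{\ell}$.} For $n\ge 1$ we have $\theta(x^n)=n x^n$, and $(\theta+1-1)^{\ell n}x^n=n^{\ell n}x^n$. Definition \ref{Delta def} with $p=\ell$, $\alpha=1$ then gives
\begin{equation*}
{}_{\ell}\Delta_1^{\theta}(n x^n)=n\,((n-1)!)^{\ell}n^{\ell n}x^n .
\end{equation*}
Dividing by $x$ as in \eqref{lDm eq} yields ${}_{\ell}D_M^{(x)}[x^n]=[n]_{\ell}x^{n-1}$, by Definition \ref{l-num}. The constant term is annihilated by $\theta$. By linearity (the Result following Remark \ref{D ope}),
\begin{equation*}
f_{\ell}(t)=\sum_{n=1}^{\infty}[n]_{\ell}\,a_{n,\ell}\,t^{n-1},
\end{equation*}
which is the same expansion already used in the proof of Theorem \ref{l-multi}.

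\textbf{Step 2: apply $\Lambda$.} Integrating termwise and choosing the constant of integration to be zero gives
\begin{equation*}
\Lambda(f_{\ell}(t))=t^{-1}\sum_{n\ge1}\frac{[n]_{\ell}}{n}a_{n,\ell}t^{n}=\sum_{m\ge0}\frac{[m+1]_{\ell}}{m+1}a_{m+1,\ell}\,t^{m}.
\end{equation*}

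\textbf{Step 3: apply ${}_{\ell}\nabla_1^{1}$.} By \eqref{nabla eq} with $\delta=1$ and $\alpha=1$, the coefficient of $t^m$ is divided by
\begin{equation*}
(1)_m^{\ell}(m+1)^{\ell(m+1)}=(m!)^{\ell}(m+1)^{\ell(m+1)}.
\end{equation*}
Since $[m+1]_{\ell}/(m+1)=(m+1)^{\ell(m+1)}(m!)^{\ell}$, these factors cancel exactly. Hence ${}_{\ell}\nabla_1^{1}\Lambda(f_{\ell}(t))=\sum_{m\ge0}a_{m+1,\ell}t^m$. Multiplying by $t$ gives $F_{\ell}(t)-a_{0,\ell}$. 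Evaluating between $a$ and $b$ as in \eqref{lIm eq}, the constant $a_{0,\ell}$ cancels, which leaves $F_{\ell}(b)-F_{\ell}(a)$.

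\textbf{Main obstacle.} The main difficulty is not the algebra but justifying the formal manipulations. The definitions of $\Lambda$ and $\nabla$ are stated for series, so one must check three things. First, the antiderivative constant is harmless: any constant $C$ contributes $x\cdot C x^{-1}$, i.e.\ a constant, after applying $\nabla$ (the $m=0$ factor $1^{\ell}$ equals $1$ via the reindexing), so it cancels in the difference. Second, termwise differentiation and integration are legitimate inside the radius of convergence containing $[a,b]$. Third, the points $a,b$ may be $0$, which Definition \ref{lIm} excludes; there I would interpret the value by the resulting power series $x\,{}_{\ell}\nabla_1^{1}\Lambda(f_{\ell})$, which is entire in $x$ on the disc. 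I would state these convergence assumptions explicitly and otherwise rely on the exact cancellation in Step 3.
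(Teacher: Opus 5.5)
Your proof is correct, but it takes a different route from the paper.

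\textbf{The paper's route.} The paper argues by uniqueness of $\ell$-antiderivatives. It sets $G_{\ell}(x)=\int_{(\ell,a)}^{x}f_{\ell}(t)\,dt$ and takes $G_{\ell}$ to be a power series $\sum b_{n,\ell}x^n$. It then says that \emph{if} ${}_{\ell}D_M^{(x)}F_{\ell}={}_{\ell}D_M^{(x)}G_{\ell}$, comparing coefficients of $x^{n-1}$ gives $a_{n,\ell}=b_{n,\ell}$ for $n\ge1$; this implicitly uses $[n]_{\ell}\neq0$. Hence $F_{\ell}-G_{\ell}$ is constant and $F_{\ell}(b)-F_{\ell}(a)=G_{\ell}(b)-G_{\ell}(a)$. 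This mirrors the classical proof that deduces the evaluation formula from the fact that $x\mapsto\int_a^x f$ is an antiderivative of $f$. However, the paper never verifies the needed hypothesis ${}_{\ell}D_M^{(x)}G_{\ell}=f_{\ell}$.

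\textbf{Comparison.} Your Step 3 cancellation $[m+1]_{\ell}/(m+1)=(m!)^{\ell}(m+1)^{\ell(m+1)}$ is exactly that missing ingredient. It shows that $x\,{}_{\ell}\nabla_1^{1}\Lambda$ sends $x^{k}$ to $x^{k+1}/[k+1]_{\ell}$, so it inverts ${}_{\ell}D_M^{(x)}$ monomial by monomial. Your direct computation is therefore self-contained and needs no uniqueness step. The paper's route, once completed by your computation, additionally records that $\ell$-antiderivatives are unique up to constants.

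\textbf{Two small fixes to your obstacle paragraph.} First, the integration constant produces the term $Cx^{-1}$ in $\Lambda$. That term lies outside the domain of ${}_{\ell}\nabla_1^{1}$, and it is not the $m=0$ term, which is $[1]_{\ell}a_{1,\ell}=a_{1,\ell}$. So simply fix the zero-constant antiderivative, as you already do in Step 2. Second, the radius of convergence that matters is that of $\sum_{n\ge1}[n]_{\ell}a_{n,\ell}x^{n-1}$. For $\ell\ge1$ this can be far smaller than that of $F_{\ell}$; it is $0$ for $F_{\ell}=e^{x}$. Its convergence does imply that of $F_{\ell}$, since $[n]_{\ell}\ge1$.
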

	
	\begin{proof}
		Define
		\begin{equation*}
			G_{\ell}(x) = \int_{(\ell,a)}^{x}f_{\ell}(t)\,dt.
		\end{equation*}
		Observe that, when $F_{\ell}(x)$ is of the form $\displaystyle \sum_{n=0}^{\infty} a_{n,\ell}\text{ }x^n$ and $G_{\ell}(x)$ is of the form $\displaystyle \sum_{n=0}^{\infty} b_{n,\ell}\text{ }x^n$, then by Definition \ref{lDm oper}
		\begin{equation*}
			_{\ell}D_M^{(x)}\left( F_{\ell}(x)\right) = \sum_{n=1}^{\infty} \left[n\right]_{\ell}a_{n,\ell}\text{ }x^{n-1}
		\end{equation*}
		and 
		\begin{equation*}
			_{\ell}D_M^{(x)}\left( G_{\ell}(x)\right) = \sum_{n=1}^{\infty} \left[n\right]_{\ell}b_{n,\ell}\text{ }x^{n-1}.
		\end{equation*}
		If $_{\ell}D_M^{(x)}\left( F_{\ell}(x)\right) =\ _{\ell}D_M^{(x)}\left( G_{\ell}(x)\right)$, comparing coefficients of $x^{n-1}$, we have 
		\begin{equation*}
			a_{n,\ell} = b_{n,\ell}, \text{ } n \in \mathbb{N}.
		\end{equation*}
		Hence, we can say that, $F_{\ell}$ and $G_{\ell}$ is only differ by a constant, i.e., $F_{\ell}(x) - G_{\ell}(x) = c$ throughout $[a,b]$ for some constant $c$.
		
		\noindent Hence, 
		\begin{align*}
			F_{\ell}(b) - F_{\ell}(a) &= \left( G_{\ell}(b)+c \right) - \left( G_{\ell}(a)+c \right) \nonumber\\
			&= \int_{(\ell,a)}^{b}f_{\ell}(t)\,dt - \int_{(\ell,a)}^{a}f_{\ell}(t)\,dt \nonumber\\
			&= \int_{(\ell,a)}^{b}f_{\ell}(t)\,dt.
		\end{align*}
	\end{proof}
	
	\begin{rem}
		Using Remark \ref{l-int(0)}, 
		\begin{equation*}
			\ \int_{(0,a)}^{b}f_{\ell}(x)\,dx = \int_{a}^{b}f(x)\,dx,
		\end{equation*}
		and hence for $\ell=0$, Theorem \ref{l-FTC} reduces to \cite{ThomasFinney} the Fundamental Theorem of Calculus for the functions of the form $f_0(x) = f(x) = \displaystyle \sum_{n=0}^{\infty} a_{n}\text{ }x^n$. 
	\end{rem}
	
Next, to obtain the $\ell$-analogue of the Gaussian integral \cite{abramowitz1948handbook} or Gaussian integral in context of the $\ell$-H exponential function\cite{chudasama2016some}, we need to generalize following definitions and remarks corresponding to Jacobian\cite{mathai1997jacobians}.

\begin{defn}\label{l-Jacob}
	The $\ell$-H Jacobian, for the transformation $x = f(r, \theta)$ and $y = g(r, \theta)$ is defined by
	\begin{equation*}
		J_{\ell} = \begin{bmatrix}
			_{\ell}{\partial}_M^{(r)}(x) & _{\ell}{\partial}_M^{({\theta})}(x) \\ \\ _{\ell}{\partial}_M^{(r)}(y) & _{\ell}{\partial}_M^{({\theta})}(y)
		\end{bmatrix},
	\end{equation*}
	where $_{\ell}{\partial}_M^{(z)}$ is an operator same as $_{\ell}D_M^{(z)}$ in Definition \ref{lDm oper} with $\theta \equiv z\dfrac{\partial}{\partial z}$.
\end{defn}

\begin{rem}
	For $\ell = 0$, using Remark \ref{D ope}, we can observe that the operator $_{\ell}{\partial}_M^{(z)}$ reduces to $\dfrac{\partial}{\partial z}$. 
	Therefore, Definition \ref{l-Jacob} reduces to \cite{mathai1997jacobians}
	\begin{equation*}
		J_0 = J = \begin{bmatrix}
			\dfrac{\partial x}{\partial r} &\dfrac{\partial x}{\partial \theta} \vspace{0.1cm}\\ \dfrac{\partial y}{\partial r} & \dfrac{\partial y}{\partial \theta}
		\end{bmatrix},
	\end{equation*}
	that is, the Jacobian.
\end{rem}
\begin{rem}\label{jacob}
	Also, when $f_{\ell}(r, \phi) = r cos_H^{\ell}(\phi)$ and $g_{\ell}(r, \phi) = r sin_H^{\ell}(\phi)$, we can observe that 
	\begin{equation*}
		|J_{\ell}| =  r\left(  cos_H^{\ell}(\phi)\right)^2 + r\left(  sin_H^{\ell}(\phi)\right)^2 = r.
	\end{equation*}
\end{rem}

Now, in the next theorem, we will have the value of the $\ell$-H Gaussian integral.
\begin{thm}[\textbf{$\ell$-H Gaussian Integral}]\label{l-Gaussian Int}
	For $Re(\ell) \geq 0$,
	\begin{equation*}\label{GI}
		\int_{(\ell,-\infty)}^{\infty}e_H^\ell\left(-x^2\right)\,dx = \dfrac{\sqrt{\pi}}{2^\ell}. 
	\end{equation*}
\end{thm}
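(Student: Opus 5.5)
The plan is to imitate the classical polar-coordinates evaluation of $\int_{-\infty}^{\infty}e^{-x^2}\,dx$. The $\ell$-H chain rule (Definition \ref{l-chain}) supplies the factor $\left[2\right]_{\ell}$, and this factor is what produces $2^{\ell}$.

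Write $I_{\ell}=\int_{(\ell,-\infty)}^{\infty}e_H^\ell(-x^2)\,dx$. First I form the square
\begin{equation*}
I_{\ell}^2=\int_{(\ell,-\infty)}^{\infty}\int_{(\ell,-\infty)}^{\infty}e_H^\ell(-x^2)\,e_H^\ell(-y^2)\,dx\,dy ,
\end{equation*}
and use the addition property (\ref{l bino eHl}) to combine the integrand into $e_H^\ell\left(-(x^2+y^2)\right)$. By the convention of the Remark preceding Definition \ref{Delta def}, the sum inside is expanded by the $\ell$-binomial rule.

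Next I change variables to $x=r\,cos_H^{\ell}(\phi)$, $y=r\,sin_H^{\ell}(\phi)$. By Remark \ref{jacob} the $\ell$-H Jacobian is $|J_\ell|=r$, so $dx\,dy$ is replaced by $r\,dr\,d\phi$. The $\ell$-sum $(x^2+y^2)_{(\ell)}$ should collapse to $r^2$, using $\left(cos_H^{\ell}\right)^2+\left(sin_H^{\ell}\right)^2=1$, which is also the identity implicit in Remark \ref{jacob}. The angular variable runs over one full period. I take the resulting factor to be $2\pi$, matching the $\ell=0$ case, so that
\begin{equation*}
I_\ell^2 = 2\pi\int_{(\ell,0)}^{\infty} r\,e_H^\ell(-r^2)\,dr .
\end{equation*}

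For the radial integral I use Theorem \ref{l-FTC}. By Result 1.2 and the $\ell$-H chain rule with $k=2$,
\begin{equation*}
{}_{\ell}D_M^{(r)}\left[e_H^\ell(-r^2)\right]=\left[2\right]_{\ell}\, r\ {}_{\ell}D_M^{(r^2)}\left[e_H^\ell(-r^2)\right]=-\left[2\right]_{\ell}\, r\, e_H^\ell(-r^2).
\end{equation*}
Hence $F_\ell(r)=-e_H^\ell(-r^2)/\left[2\right]_{\ell}$ is an $\ell$-antiderivative of $r\,e_H^\ell(-r^2)$. Since $e_H^\ell(-r^2)\to 0$ as $r\to\infty$ and $e_H^\ell(0)=1$, the radial integral equals $1/\left[2\right]_{\ell}$. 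Definition \ref{l-num} gives $\left[2\right]_{\ell}=2^{2\ell+1}(1!)^{\ell}=2^{2\ell+1}$. Therefore
\begin{equation*}
I_\ell^2=\frac{2\pi}{2^{2\ell+1}}=\frac{\pi}{4^{\ell}}.
\end{equation*}
Taking the positive root, which is justified because the integrand is an even function equal to $1$ at $0$ and reduces to $e^{-x^2}$ when $\ell=0$, gives $I_\ell=\sqrt{\pi}/2^{\ell}$.

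The main obstacle is the change-of-variables step. The paper has not proved a Fubini-type theorem for the $\ell$-integral, a substitution formula involving $|J_\ell|$, the identity $(x^2+y^2)_{(\ell)}=r^2$ in polar form, or the $\phi$-range for the $\ell$-trigonometric functions. I would state these as the $\ell$-analogues of the classical facts, consistent with Remark \ref{jacob}, and check that each reduces correctly at $\ell=0$, rather than derive them from Definition \ref{lIm}. A secondary issue is extending Theorem \ref{l-FTC} to the infinite limit. I would handle this by passing $b\to\infty$ in $F_\ell(b)-F_\ell(0)$ using the decay of $e_H^\ell(-r^2)$.
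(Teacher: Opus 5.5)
Your outline follows the paper's proof almost step for step. You square the integral, merge the integrands by (\ref{l bino eHl}), and substitute $x=r\,cos_H^{\ell}(\phi)$, $y=r\,sin_H^{\ell}(\phi)$ with $|J_\ell|=r$ and angular factor $2\pi$. You then get $1/\left[2\right]_{\ell}$ from the radial integral and finish with $\left[2\right]_{\ell}=2^{2\ell+1}$; the paper gets the radial value by substituting $t=r^2$, you via Definition \ref{l-chain} and Theorem \ref{l-FTC}. The gap is that the ingredients you plan to postulate as $\ell$-analogues of classical facts are not merely unproved: for $\ell>0$ they are false, so the argument cannot be completed.

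(i) From the series, $(cos_H^{\ell}\phi)^2+(sin_H^{\ell}\phi)^2=1+(1-4^{-\ell})\phi^2+O(\phi^4)$. Hence the determinant of $J_\ell$ is $r\left((cos_H^{\ell}\phi)^2+(sin_H^{\ell}\phi)^2\right)\neq r$. Also $(x^2+y^2)^1_{(\ell)}=x^2+y^2\neq r^2$ already at the first power, since $\binom{1}{k}^{(\ell)}=1$. Moreover, for real $\ell>0$ the coefficients $((n!)^{\ell n+1})^{-1}$ make $e_H^{\ell}$, $cos_H^{\ell}$, $sin_H^{\ell}$ entire of order $0$. A nonconstant periodic entire function has order at least $1$, so there is no ``full period'' that could produce $2\pi$.

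(ii) Definition \ref{l-chain} is not a free choice, because both of its sides are already fixed by Definition \ref{lDm oper}. On $f(u)=u^n$ it requires $\left[kn\right]_{\ell}=\left[k\right]_{\ell}\left[n\right]_{\ell}$, which fails: $\left[4\right]_{\ell}=4\cdot 1536^{\ell}$ while $\left[2\right]_{\ell}^2=4\cdot 16^{\ell}$. Concretely, the $r^3$-coefficient of ${}_{\ell}D_M^{(r)}\left[-e_H^{\ell}(-r^2)/\left[2\right]_{\ell}\right]$ is $-96^{\ell}$, not the $-1$ of $r\,e_H^{\ell}(-r^2)$.

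(iii) $e_H^{\ell}(-t)\not\to 0$. An entire function of order $0$ is unbounded on every ray (Wiman). Numerically, $e_H^{1}(-100)\approx 392$ and $e_H^{1}(-1000)\approx -5.2\times 10^{5}$.

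These are exactly the steps the paper's own proof rests on: Remark \ref{jacob}, the substitution $t=r^2$, and $e_H^{\ell}(-\infty)=0$. So the paper offers no repair. In fact no route can work, because the statement fails for $\ell\in\mathbb{N}$. For $\ell=1$, Definition \ref{lIm} gives $\int_{(1,0)}^{b}e_H^{1}(-x^2)\,dx=\sum_{n\geq 0}\frac{(-1)^n b^{2n+1}}{(n!)^{n+1}\left[2n+1\right]_{1}}=b-\frac{b^3}{162}+\frac{b^5}{3\cdot 10^{6}}-\cdots$. This is about $-2.76\times 10^{3}$ at $b=100$. Being $b$ times an order-zero entire function of $b^2$, it is unbounded as $b\to\infty$, so the improper $\ell$-integral over $(-\infty,\infty)$ does not exist. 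Only the case $\ell=0$, the classical Gaussian integral, is actually established by this argument.
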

	
\begin{proof}
	Taking the transformation $x=r cos_H^{\ell}(\theta)$ and $y=r  sin_H^{\ell}(\theta)$, by Remark \ref{jacob}, we have $|J_{\ell}| = r$.
		
	\noindent Now, from \eqref{l bino eHl}, we can write
	\begin{align*}
		\lefteqn{\int_{(\ell,-\infty)}^{\infty}e_H^{\ell}\left(-x^2\right) \,dx  \int_{(\ell,-\infty)}^{\infty}e_H^{\ell}\left(-y^2\right) \,dy} \\
		&= \int_{(\ell,-\infty)}^{\infty}\ \int_{(\ell,-\infty)}^{\infty}e_H^{\ell}\left(-\left(x^2+y^2\right)\right)\,dx \,dy \\
		&= \int_{(\ell,0)}^{\infty}\int_{(\ell,0)}^{2\pi}e_H^\ell\left(-\left(r^2\right)\right)r\,dr\,d\theta\\
		&= \int_{(\ell,0)}^{\infty}e_H^\ell\left(-\left(r^2\right)\right)r\,dr \int_{(\ell,0)}^{2\pi}\,d\theta \\
		&= \dfrac{2\pi}{\left[2\right]_{\ell}} \int_{(\ell,0)}^{\infty}e_H^\ell(-(t))\,dt \nonumber\\
		&= \dfrac{2\pi}{\left[2\right]_{\ell}} \left[ -e_H^\ell(-t)\right]_{0}^{\infty}.
		\end{align*}
	Using Definition \ref{lIm}, we have
	\begin{align}
		\int_{(\ell,-\infty)}^{\infty}e_H^{\ell}\left(-x^2\right) \,dx  \int_{(\ell,-\infty)}^{\infty}e_H^{\ell}\left(-y^2\right) \,dy \nonumber
		&= \dfrac{2\pi}{\left[2\right]_{\ell}} \left[ 	-e_H^\ell(-\infty)+e_H^\ell(0)\right] \nonumber\\
		&= \dfrac{2\pi}{\left[2\right]_{\ell}}\left[-0+1\right]\nonumber\\
		&= \dfrac{2\pi}{\left[2\right]_{\ell}}\label{gi}.
	\end{align}
	Now, with the aid of Definition \ref{l-num}, observe that
	\begin{equation*}
		\dfrac{2\pi}{\left[2\right]_{\ell}} = \dfrac{2\pi}{(2)^{2\ell+1}(1!)^{\ell}} = \dfrac{\pi}{2^{2\ell}}.
	\end{equation*}
	Taking square root on both the sides of \eqref{gi}, to get
	\begin{align*}
		\int_{(\ell,-\infty)}^{\infty}e_H^{\ell}\left(-x^2\right) \,dx &= 	\left(\dfrac{{\pi}}{{2^{2\ell}}}\right)^{\dfrac{1}{2}}\\ 
		&= \dfrac{\sqrt{\pi}}{{2^{\ell}}}.
	\end{align*}
\end{proof}
	
\begin{rem}
	For $\ell = 0$, observe that $\ell$-H Gaussian integral becomes
	\begin{equation*}
		\int_{(0,-\infty)}^{\infty}e_H^{0}\left(-x^2\right) \,dx = \dfrac{\sqrt{\pi}}{2^0}.
	\end{equation*}
	Therefore, from Remark \ref{l-int(0)} and \eqref{e^z}, above expression becomes \cite{abramowitz1948handbook}
	\begin{equation*}
		\int_{-\infty}^{\infty}e^{-x^2} \,dx = \sqrt{\pi},
	\end{equation*}
	which is Gaussian Integral.
\end{rem}

\section{Conclusion}
\indent In this paper, we have reviewed and analyzed hyper Bessel type generalized derivative operator call it as $\ell$-H derivative operator. After this, a new generalization of the natural number ($\ell$-number), is defined, various combinatorial properties are examined. Then we have generalized the classical results of calculus viz chain rule, multiplication rule for the $\ell$-H derivative operator. Further, an $\ell$-analogue of the integral operator i.e. $\ell$-H integral is defined. Also, fundamental theorem of integral calculus is extended via $\ell$-H integral operator. At last, an $\ell$-analogue of Jacobian is defined to extend the Gaussian integral in context of $\ell$-H exponential function via the transformation in terms of $\ell$-H sine and cosine function.
Because orthogonal polynomials have exponential functions, trigonometric functions as their generating functions, and as calculus results are crucial to understand the nature and behavior of orthogonal polynomials, this work offers a generalization in the field of orthogonal polynomials, which makes this research valuable.


\section*{Declarations}

\subsection*{Availability of Data and Materials}
Data sharing is not applicable to this article as no datasets were generated or analyzed during the current study.

\subsection*{Competing Interests}
The authors declare that they have no competing interests.

\subsection*{Funding}
This research received no specific grant from any funding agency in the public, commercial, or not-for-profit sectors.

\subsection*{Authors' Contributions}
All authors contributed to the conception and design of the study, manuscript preparation, and revision. All authors read and approved the final manuscript.

\subsection*{Acknowledgment}
The first author is thankful to the CSIR-UGC for providing financial assistance through a Junior Research Fellowship.

\bibliographystyle{plain}
\bibliography{l-Cal-ref}
	
\end{document}